\numberwithin{equation}{section}
\newcommand{\calX}{\mathcal{X}}
\newcommand{\scrF}{\mathscr{F}}
\DeclareMathOperator{\id}{id}
\DeclareMathOperator{\spn}{span}
\DeclareMathOperator{\tr}{tr}
\DeclareMathOperator{\Ad}{Ad}
\DeclareMathOperator{\ad}{ad}
\DeclareMathOperator{\Diff}{Diff}
\newcommand{\ve}{\varepsilon}
\DeclareMathOperator{\DIV}{div}
\begin{document}

\section{Introduction}
Stochastic flows appear in a range of applications from fluid dynamics \cite{holmVariationalPrinciplesStochastic2015,ACC14} where small scale, turbulent dynamics can be modelled stochastically to shape modelling \cite{arnaudonGeometricFrameworkStochastic2019} where the shape of human organs or changes of animal morphology through evolution can be modelled as stochastic flows.
We consider Brownian flows of diffeomorphisms in the sense of Kunita \cite{kunitaStochasticFlowsStochastic1997} given by stochastic differential equations of the form
\begin{equation} \label{ShapeSDE} d\varphi_t = u_t(\varphi_t) \, dt + \sum_{r=1}^J \sigma_{r,t}(\varphi_t) \circ dW^r,\qquad \varphi_0=\varphi.\end{equation}
Here $u_t$, $\sigma_{1,t}$, $\dots$, $\sigma_{J,t}$ are time-dependent vector fields on an open domain~$M$ in~$\mathbb{R}^d$, which are smooth in both the time and the space variable. Assume now that we are given a diffeomorphic transformation $\psi$ of $M$. We can consider this a realization of the flow at some positive time $T>0$, and we wish to find the most probable way for a diffeomorphism $\varphi_0=\varphi$ to transform such that $\varphi_T=\psi$.
While being an infinite dimensional problem, we show how the most probable transformations can be identified by using a Riemannian metric on $M$ originating from the noise fields $\sigma_{r,t}$. We use this to identify evolution equations for most probable trajectories point-wise. This in turn gives equations for most probable flows between $\varphi$ and $\psi$.

\subsection{Background}
Most probable paths for Euclidean Brownian motions were identified by Onsager and Machlup \cite{onsagerFluctuationsIrreversibleProcesses1953,machlupFluctuationsIrreversibleProcess1953} and described by the Onsager-Machlup functional $\gamma \mapsto \int_0^T H(\gamma(t),\dot{\gamma}(t)) dt=\frac12 \int_0^T \|\dot{\gamma}(t)\|^2 \, dt$ that measures the probability that realizations of a Brownian motion $W_t$ sojourn around smooth paths in the sense of staying in $\varepsilon>0$ diameter cylinders. 
If instead $\gamma$ is a curve on a finite dimensional Riemannian manifold, the Onsager-Machlup functional changes so that the integral is now over $H(\gamma(t),\dot{\gamma}(t))=\left( \frac12\|\dot{\gamma}(t)\|_g^2-\frac1{12}S(\gamma(t))\right)$ where $S$ is the scalar curvature. Most probable paths on manifolds thus minimize a functional that in addition to the path energy includes the integral of the scalar curvature along the path.

The situation for stochastic processes on infinite dimensional spaces remains largely unexplored. In this paper, we consider Kunita flows of diffeomorphisms of the domain $M\subseteq\mathbb R^d$. For a stochastic flow $\varphi_t\in\Diff(M)$, we ask what is the most probable single realization $\hat{\varphi}_t$ that bridges $\varphi_0$ to a fixed transform $\psi\in\Diff(M)$. We achieve this by equipping $M$ with a Riemannian metric originating from the noise fields $\sigma_{i,t}$. We then invoke the Onsager-Machlup result \cite{CP14} for elliptic time-dependent diffusion operators on $M$ to give a point-wise result. The resulting second-order ODE on $M$ gives the most probable flow $\hat{\varphi}_t$.

Onsager-Machlup functionals for infinite dimensional systems have been studied in the literature
\cite{demboOnsagerMachlupFunctionalsMaximum1991,mayerwolfOnsagerMachlupFunctionals1993,BRT03} for specific Hilbert-space valued SDEs. However, these are not available when considering smooth diffeomorphisms, which form a Lie-Fr\'echet group, see e.g. \cite{Neeb06}.
While the idea of encoding properties of the diffusion in a Riemannian metric has been used in the finite dimensional setting \cite{takahashiProbabilityFunctionalsOnsagermachlup1981a,capitaineOnsagerMachlupFunctionalElliptic2000}, the main new idea of the paper is to use such a metric for infinite dimensional Kunita SDEs.

Particular examples of flows to which our result apply are perturbations of geodesic flows for right-invariant metrics on $\Diff(M)$. These geodesics appear both in fluid dynamics describing e.g. the incompressible Euler equations \cite{arnoldMathematicalMethodsClassical1978} and in shape analysis in the Large deformation diffeomorphic metric mapping (LDDMM) framework \cite{younesShapesDiffeomorphisms2010,bauerOverviewGeometriesShape2014}. 
The stochastically perturbed flows have recently been applied in fluid dynamics to couple small-scale stochastic fluctuations and coarse scale deterministic dynamics 
\cite{holmVariationalPrinciplesStochastic2015,holmStochasticVariationalFormulation2020} 
and in shape analysis to model human organ shape changes or animal shapes changes through evolution
\cite{arnaudonGeometricFrameworkStochastic2019}. By identifying the most probable paths for such processes, we are able to give one summary statistic of a fluid system or provide a most probable evolution of a given species shape through time. Shapes can here be interpreted generally as objects on which $\Diff(M)$ act, e.g. on transformation landmark configurations $x=(x_1,\ldots,x_n)$ where they act by $\varphi.x=(\varphi(x_1),\ldots,\varphi(x_n))$ or on images $I:M\to\mathbb R$ with action $\varphi.I=I\circ\varphi^{-1}$.

\subsection{Outline}
We start by a brief outline of Kunita type stochastic flows, stochastic flows in fluid dynamics and shape analysis, and most probable path theory. We then develop the general construction for retrieving Onsager-Machlup results of flows from the corresponding theory of manifold-valued diffusion processes. We discuss various instances of stochastic models with application in e.g. shape analysis and investigate the case of evolutions of finite sets of landmarks under stochastic shape models together with numerical visualization of the different types of evolutions.

\section{Stochastic flows and measures on path space}
\subsection{Stochastic flows} Given a probability space $(\Omega,\scrF,P)$, let $\varphi_{s,t}$ denote $s,t$ parametrized maps from $M\times\Omega$ to $M$. A Brownian flow of homeomorphism in the sense of Kunita is a stochastic process $\varphi_{s,t}$ such that $\varphi_{s,t}(x,\cdot)$ is continuous in $s,t,x$ in probability and $\varphi_{s,t}(x,\omega)$ is continuous in $t$ almost surely (a.s.); $\varphi_{s,s}=\id_M$ for each $s$ a.s.; $\varphi_{s,t}=\varphi_{r,t}\circ\varphi_{s,r}$; and $\varphi_{s,t}$ has independent increments \cite{kunitaStochasticFlowsStochastic1997}. Particular examples of Brownian flows are solutions $\varphi_t=\varphi_{0,t}$ to the It\^o SDE
\begin{equation} \label{Ito} d\varphi_t = \hat u_t(\varphi_t) \, dt + \sum_{r=1}^J \sigma_{r,t}(\varphi_t)  dW^r\end{equation}
for time-dependent vector fields $\hat u_t,\sigma_{1,t},\ldots,\sigma_{J,t}:M\to\mathbb R^d$. $\hat u$ and $a_t=\sum_{r=1}^J\sigma_{r,t}\sigma_{r,t}^T$ defines the local characteristics of the flow. If the local characteristics are $C^1$ with bounded derivatives and if the derivatives of $a$ are Lipschitz, the It\^o to Stratonovich conversion applies giving the equivalent Stratonovich form \eqref{ShapeSDE}, \cite{kunitaLecturesStochasticFlows1986, kunitaStochasticFlowsStochastic1997}.

\subsection{Path probability and Onsager-Machlup functionals}
Let $\gamma\in C([0,T],\mathbb R^d)$ be a path in $\mathbb R^d$ and $W_t$ a Brownian motion on $\mathbb R^d$. The probability that $W_t$ sojourns in an $\varepsilon$-neighborhood around $\gamma$ for all $t\in [0,T]$ is defined by
\begin{equation}
  \mu_\ve(\gamma)
  =
  P(\|W_t-\gamma(t)\|<\ve\ \forall t\in[0,T])
  .
\end{equation}
In \cite{fujitaOnsagerMachlupFunctionDiffusion1982}, it was shown that, when $\gamma$ is differentiable, $\log \mu_\ve(\gamma)$ tends to $c_1+c_2/\ve^2- \int_0^T H(\gamma(t),\dot{\gamma}(t))\, dt$ for constants $c_1,c_2$ as $\ve\to 0$ and with $H(\gamma(t),\dot{\gamma}(t))=\frac12  \|\dot{\gamma}(t)\|^2$. The function $\gamma \mapsto \int_0^T H(\gamma(t),\dot{\gamma}(t))\, dt$ is called the Onsager-Machlup functional. Paths between two points minimizing this functional are denoted \emph{most probable}, and, because of the equivalence to the standard Euclidean energy of a curve, the most probable paths will be straight lines. If $\gamma$ is a curve on a finite dimensional Riemannian manifold, the Onsager-Machlup functional is the integral of $H(\gamma(t),\dot{\gamma}(t)) = \frac{1}{2} \|\dot{\gamma}(t)\|_g^2-\frac1{12}S(\gamma(t)) \, dt$ with $S$ the scalar curvature.

The functional has been explored in the literature after the original work by Onsager and Machlup. 
Encoding properties of the diffusion processes in a Riemannian metric has been used in works including \cite{takahashiProbabilityFunctionalsOnsagermachlup1981a,capitaineOnsagerMachlupFunctionalElliptic2000}.
Particularly relevant here is the Onsager-Machlup functionals for time-dependent elliptic operators treated in \cite{CP14}. Also related to the presented approach is the application of the Onsager-Machlup theory for semi-martingales on manifolds that are represented as development of Euclidean semi-martingales, see e.g. \cite{sommerAnisotropicallyWeightedNonholonomically2016,grongMostProbablePaths2022} where the Onsager-Machlup functional is applied on anti-developments to determine most probable paths for semi-martingales on manifolds.

\section{Most probable flows}
\label{sec:most_probable_flows}
The stochastic process in \eqref{ShapeSDE} has infinitesimal generator
$$L_t = \frac{1}{2} \sum_{r=1}^J \sigma_{r,t}^2 + u_t,$$
see \cite[Section~4.2]{kunitaStochasticFlowsStochastic1997} for details.
We assume that $L_t$ is a elliptic operator which is equivalent to assuming that $\spn\{ \sigma_r\}_{r=1}^J = \mathbb{R}^d$ at any point. Define a cometric
$$g^*_t = \sum_{r=1}^J \sigma_{r,t} \otimes \sigma_{r,t},$$
which will be a Riemannian cometric by our assumptions. Write $\Delta_{g_t}$ for the Laplace-Beltrami operator of the corresponding Riemannian metric $g_t$ and define a vector field~$z_t$ such that
\begin{equation} \label{Loperator} 
L_t = \frac{1}{2}\Delta_{g_t} + z_t.
\end{equation}
For details around the stochastic flow $\varphi_t$, see Remark~\ref{re:Details}. The result of \cite{CP14} then tells us that the Onsager-Machlup functional is of the form $\int_0^T H(t,\gamma(t),\dot \gamma(t)) \, dt$ with
\begin{align} \label{OMFunc1}
H(t, \gamma, \dot \gamma) & = \frac{1}{2} \|\dot \gamma_t - z_t (\gamma_t)\|_{g_t}^2 +  \frac{1}{2} (\DIV_{g_t} z_t)(\gamma_t) - \frac{1}{12} S_t(\gamma_t) + \frac{1}{4} \tr_{g_t} \dot g_t|_{\dot \gamma_t} \\ \nonumber
& =: \frac{1}{2}  \|\dot \gamma_t - z_t(\gamma_t) \|^2_{g_t}  + f_t(\gamma_t).
\end{align}
Here, $S_t$ is the scalar curvature of $g_t$. We now have the following result.

\begin{theorem}[Most probable flows] \label{th:DetDrift}
Let $\nabla^t$ be the Levi-Civita connection of $g_t$ and let $z_t$ be the vector field in \eqref{Loperator}. With slight abuse of notation, we define adjoint operators $\dot g_t^*, (\nabla^t \xi)^* : TM \to TM$ by
\begin{equation} \label{AbuseNotation} \left\langle \dot g_t^* \xi_1, \xi_2 \right\rangle_{g_t} =
\dot g_t(\xi_1, \xi_2), \qquad \left\langle (\nabla^t \xi)^* \xi_1, \xi_2 \right\rangle_{g_t} =\left\langle \nabla_{\xi_2} \xi, \xi_1 \right\rangle_{g_t},\end{equation}
for vector fields $\xi$, $\xi_1$, $\xi_2$.
Then $\varphi_t$ is a most probable flow if and only if it solves the equation
$$\dot \varphi_t = (w_t+z_t)(\varphi_t),$$
with $w_t$ satisfying
\begin{equation} \label{GlobalEq} \dot w_t  + \nabla_{w_t+z_t}^t w_t  + \dot g_t^* w_t + (\nabla^t z_t)^* w_t =\nabla^t f_t.\end{equation}
\end{theorem}
We emphasize that $\dot g_t^*$ denotes the metric adjoint of $\dot g_t$ and not the time-derivative of the cometric. We remark that the above result was computed in \cite{CP14} for the special case of $g_t$ being given by the Ricci flow and $z_t = 0$. Theorem~\ref{th:DetDrift} follows from defining $\gamma_t = \varphi_t(x)$ for an arbitrary $x$ and applying the lemma below. Write $\frac{D^t}{dt}$ for the covariant derivative along the curve $\gamma_t$ with respect to $\nabla^t$. Note that $\frac{D^t}{dt} w_t(\gamma_t) = (\dot w_t)(\gamma_t) + (\nabla_{\dot \gamma_t}^t w_t)(\gamma_t)$ for the restriction of a time-dependent vector field along a curve.

\begin{lemma}
For two given fixed points, let $\gamma$ be a minimizing curve of the functional $H$ in \eqref{OMFunc1} connecting these end points. Then $\gamma_t$ is a solution of
\begin{equation} \label{CurveEq} \frac{D^t}{dt} \dot \gamma_t   + \dot g_t^* \dot \gamma_t + (\nabla^t z_t)^* \dot \gamma_t = \dot z_t  + \nabla_{\dot \gamma_t}^t z_t + \dot g_t^* z+t+  (\nabla^t z_t)^*  z_t +\nabla^t f_t.\end{equation}
\end{lemma}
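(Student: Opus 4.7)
The plan is to derive \eqref{CurveEq} as the Euler--Lagrange equation of the action $\int_0^T H(t,\gamma,\dot\gamma)\,dt$ with Lagrangian $H = \tfrac{1}{2}|\dot\gamma - z_t|_{g_t}^2 + f_t(\gamma)$, where $f_t$ is a smooth function of time and position only. I take a smooth variation $\gamma_s$ of $\gamma$ with fixed endpoints and variation field $V=\partial_s\gamma_s|_{s=0}$. Because $[\partial_s,\partial_t]=0$ and $\nabla^t$ is torsion-free, $\nabla^t_{\partial_s}\dot\gamma_s = \nabla^t_{\partial_t} V = \tfrac{D^t}{dt}V$, and since $z_t$ depends on time and position only, $\nabla^t_{\partial_s}\,z_t|_{\gamma_s} = \nabla^t_V z_t$. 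Using compatibility of $\nabla^t$ with $g_t$ at each fixed $t$, the first variation reads
\begin{equation*}
\delta\!\int_0^T \!H\, dt = \int_0^T \Big[g_t\bigl(\tfrac{D^t}{dt}V - \nabla^t_V z_t,\, \dot\gamma - z_t\bigr) + g_t(V, \nabla^t f_t)\Big]\, dt.
\end{equation*}

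Next, I integrate the $\tfrac{D^t}{dt}V$ term by parts using the identity
\begin{equation*}
\tfrac{d}{dt}\,g_t(V,\dot\gamma - z_t) = \dot g_t(V,\dot\gamma - z_t) + g_t\bigl(\tfrac{D^t}{dt}V,\dot\gamma - z_t\bigr) + g_t\bigl(V,\tfrac{D^t}{dt}(\dot\gamma - z_t)\bigr),
\end{equation*}
which, together with the vanishing of $V$ at the endpoints, produces an extra $\dot g_t$ contribution. For the $-g_t(\nabla^t_V z_t,\dot\gamma - z_t)$ term I view $\nabla^t z_t$ as a fibrewise endomorphism of $TM$ and pass to its $g_t$-adjoint $(\nabla^t z_t)^*$. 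For the $\dot g_t$ contribution I use symmetry of $\dot g_t$ and the musical operator $\sharp_t$ to write $\dot g_t(V,\dot\gamma - z_t) = g_t\bigl(V,\, \dot g_t(\dot\gamma - z_t,\cdot)^{\sharp_t}\bigr)$. Collecting all terms and applying the fundamental lemma of the calculus of variations yields the global identity
\begin{equation*}
\tfrac{D^t}{dt}(\dot\gamma - z_t) + \dot g_t(\dot\gamma - z_t,\cdot)^{\sharp_t} + (\nabla^t z_t)^*(\dot\gamma - z_t) = \nabla^t f_t.
\end{equation*}

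Finally, to cast this into the stated form of \eqref{CurveEq}, I decompose $\tfrac{D^t}{dt} z_t = \dot z_t + \nabla^t_{\dot\gamma} z_t$, the covariant time-derivative along $\gamma$ of the time-dependent vector field $z_t$, and distribute the adjoint $(\nabla^t z_t)^*$ across the difference $\dot\gamma - z_t$. Moving all pure $z_t$ terms to the right-hand side reproduces \eqref{CurveEq} exactly. The main obstacle will be the bookkeeping of the $\dot g_t$ contributions, which appear both from the time-dependence of $g_t$ under integration by parts and implicitly within $\tfrac{D^t}{dt}z_t$; keeping track of symmetrisations and $g_t$-adjoints carefully is needed to ensure that, on the left-hand side, only the single coupling term $\dot g_t(\dot\gamma - z_t,\cdot)^{\sharp_t}$ survives and that no stray velocity-metric-derivative terms remain.
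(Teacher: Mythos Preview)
Your proposal is correct and follows essentially the same route as the paper: take a fixed-endpoint variation, compute the first variation of $\int_0^T H\,dt$ to obtain $\int_0^T \bigl[g_t(\tfrac{D^t}{dt}V - \nabla^t_V z_t,\,\dot\gamma - z_t) + g_t(V,\nabla^t f_t)\bigr]\,dt$, integrate the $\tfrac{D^t}{dt}V$ term by parts picking up the $\dot g_t$ contribution and the $(\nabla^t z_t)^*$ adjoint, and conclude via the fundamental lemma that $\tfrac{D^t}{dt}a_t + \dot g_t(a_t,\cdot)^{\sharp_t} + (\nabla^t z_t)^* a_t = \nabla^t f_t$ with $a_t = \dot\gamma - z_t$. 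The paper's proof is simply a terser version of yours, abbreviating $\dot\gamma - z_t$ as $a_t$ and omitting the explicit product-rule identity you spell out; your final expansion $\tfrac{D^t}{dt}z_t = \dot z_t + \nabla^t_{\dot\gamma} z_t$ is exactly what is needed to match \eqref{CurveEq}.
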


\begin{proof}
Let $\gamma_t^s$ be a variation of $\gamma_t$ with fixed end points. We recall that
$\frac{D^t}{ds} \dot \gamma_t^s = \frac{D^t}{dt} \partial_s \gamma_t^s $. We emphasize that $\frac{D^t}{ds}$ is a derivative with respect to $s$, and that the dependence on $t$ is through taking the covariant derivative with respect to $\nabla^t$. We introduce notation $\partial_s \gamma_t^s|_{s=0} = v_t$ and $\dot \gamma(t) = a_t+z_t(\gamma_t)$. Differentiating \eqref{OMFunc1}, we compute
\begin{align*}
    & \frac{d}{ds}\int_0^T H(t,\gamma^s,\dot \gamma^s)|_{s=0} \, dt |_{s=0} \\
    & = \int_0^T \left( \left\langle a_t , \frac{D^t}{ds} (\dot \gamma_t^s - z_t) |_{s=0} \right\rangle_{g_t} + \left\langle \nabla^t f, \partial_s \gamma_t^s |_{s=0} \right\rangle_{g_t} \right) dt \\
    & = \int_0^T \left( \left\langle a_t , \frac{D^t}{dt} v_t  \right\rangle_{g_t} - \left\langle a_t , \nabla_{v_t}^t  z_t \right\rangle_{g_t} + \left\langle \nabla^t f, v_t \right\rangle_{g_t} \right) dt.
\end{align*}
Using that
\begin{align*}
\frac{d}{dt} \left\langle a_t , v_t  \right\rangle_{g_t}= \langle \dot g_t^* a_t, v_t \rangle_{g_t} + \left\langle \frac{D^t}{dt} a_t ,  v_t  \right\rangle_{g_t} 
+ \left\langle a_t , \frac{D^t}{dt} v_t  \right\rangle_{g_t},
\end{align*}
we get
\begin{align*}
\frac{d}{ds} \int_0^T H(t,\gamma^s,\dot \gamma^s)|_{s=0} \, dt 
& = \int_0^T \left\langle v_t, - \frac{D^t}{dt} a_t - \dot g_t^* a_t - (\nabla^t z_t)^* a_t  +\nabla^t f_t \right\rangle_{g_t} \, dt.
\end{align*}
We see that for this expression to vanish for any variational vector field $v_t$ we must have
\begin{align} 
\label{aequation}
0 & = \frac{D^t}{dt} a_t + \dot g_t^* a_t + (\nabla^t z_t)^* a_t -\nabla^t f_t.
\end{align}
The result follows.
\end{proof}
\begin{remark}
Alternatively, the equations $\dot \gamma = a_t + z_t(\gamma_t)$ and \eqref{aequation} can be used to find the flow lines of the most probable flow.
\end{remark}

\begin{remark} \label{re:Details}
Up until now, we have not gone into details about the Kunita flow itself. For our classes of vector fields and for a fixed $x$, we can always find a unique maximal solution $\varphi_t(x)$ of \eqref{ShapeSDE} with initial condition $\varphi_0(x) = x$ up to its explosion time, see, e.g., \cite[Theorem 3.4.5]{kunitaStochasticFlowsStochastic1997}. We need to be more careful when it comes to concluding that the flow $x \mapsto \varphi_t(x)$ is a true diffeomorphism of $M$. For the time-homogeneous case when the vector fields $\sigma_r = \sigma_{r,t}$ and $u = u_t$ have no dependence on $t$, then $\varphi_t$ is almost surely a diffeomorphism if the diffusion does not explode in finite time, see \cite{kunita2006decomposition}, \cite[Theorem~5.1.1]{li2021stochastic}. Sufficient conditions for non-explosions is completeness and lower-bound for the Ricci curvature, or that it in general does not decay too fast when approaching infinity, see \cite{grigor1999analytic}, \cite[Theorem 5.1.1]{Hsu02} for details.
For the time-inhomogeneous case, a sufficient condition for the solution to be a diffeomorphism is for any $[0,T]$, there exists an $L^1$ function $C_t$ such that all of the vector fields $\sigma_{1,t}, \dots, \sigma_{J,t}, u_t$ have all spacial derivatives bounded by $C_t$, and furthermore, the vector fields themselves are bounded by $C_t(1+|x|)$, see \cite[Theorem~4.6.5]{kunitaStochasticFlowsStochastic1997}.
\end{remark}

\subsection{Local coordinate representations for numerical implementation}
We will below give explicit formulas for computing the most probable flow from the equation \eqref{ShapeSDE} that can be implemented numerically. We consider $M$ as a $\mathbb{R}^d$ with coordinates $x = (x^1, \dots, x^d)$. In what follows, all terms will depend on $t$, but we will suppress this dependence to simplify notation. Write $u = \sum_{k=1}^d u^k \partial_k$ and $\sigma_{r} = \sum_{i=1}^d \sigma^i_{r} \partial_i$.
\begin{enumerate}[\rm (1)]
\item The cometric $g^* = \sum_{i,j=1}^d g^{ij} \partial_i \otimes \partial_j$ is given by $g^{ij} = \sum_{r=1}^J \sigma_{r}^i \sigma_{r}^j$. The corresponding Laplacian is given by
$$\Delta_{g} = \sum_{i,j=1}^d g^{ij} \partial_i \partial_j + \sum_{i,j=1}^d \frac{1}{\sqrt{|g|}} \partial_i (\sqrt{|g|} g^{ij}) \partial_j.$$
Since
\begin{align*}
\sum_{r=1}^J \sigma_{r}^2 & = \sum_{r=1}^J \sum_{i,j=1}^d \sigma_{r}^i \sigma_{r}^j \partial_{i} \partial_j + \sum_{r=1}^J \sum_{i,j=1}^d (\sigma_{r}^i \partial_i \sigma_{r}^j) \partial_j \\
& = \Delta_{g} + \sum_{i,j=1}^d \left(\sum_{r=1}^J \sigma_{r}^j \partial_i \sigma_{r}^i - \frac{1}{2} g^{ij} \partial_i \log |g| - \partial_i g^{ij} \right) \partial_j ,
\end{align*}
we have that $z = \sum_{j=1}^d z^j \partial_j$ equals
$$z^j  = u^j + \frac{1}{2} \sum_{i,j=1}^d \left(\sum_{r=1}^J \sigma_{r}^j \partial_i \sigma_{r}^i - \frac{1}{2} g^{ij} \partial_i \log |g| - \partial_i g^{ij} \right) \partial_j .$$
\item As usual, the Christoffel symbols of the Levi-Civita connection is given by $\Gamma_{ij}^k = \frac{1}{2} \sum_{l=1}^d g^{kl} \left( \partial_j g_{li} + \partial_i g_{lj} - \partial_l g_{ij} \right)$, with the scalar curvature expressed as
$$S = \sum_{i,j,k=1}^d g^{ij} \left(\partial_k \Gamma_{ij}^k - \partial_j \Gamma_{ik}^k + \sum_{l=1}^d (\Gamma_{ij}^l \Gamma_{kl}^k - \Gamma_{ik}^l \Gamma_{jl}^k) \right).$$
\item For the maps introduced in \eqref{AbuseNotation}, we have local expression
\begin{align*}
\dot g^* \partial_j &= \sum_{k=1}^d \dot g_{jk} g^{ik} \partial_i, \\
(\nabla z)^* \partial_j & = \sum_{i,l=1}^d g_{lj} g^{ik} \left(\partial_k z^l + \sum_{m=1}^d z^m \Gamma_{km}^l\right) \partial_i.
\end{align*}
\item Finally, $f = f_t$ is given by
$$f = \frac{1}{2} \sum_{l,m=1}^d g^{lm} \left(\frac{1}{2} \dot g_{lm} +\partial_l z^m + \sum_{k=1}^d z^k\Gamma^m_{lk}\right) - \frac{1}{12} S.$$
\end{enumerate}
For the global equation, we can write \eqref{GlobalEq} as
\begin{align*} 0 & = \dot w^i + \sum_{j=1}^d (w^j +z^j) \partial_j w^i + \sum_{j,k=1}^d (w^k + z^k) w^j \Gamma_{kj}^i + \sum_{j,k=1}^d \dot g_{jk} g^{ik} w^j  \\
&  \qquad + \sum_{j,k,l=1}^d g_{jl} g^{ik} w^j \left(\partial_k z^l + \sum_{m=1}^d z^m \Gamma_{km}^l \right) - \sum_{j=1}^d g^{ij} \partial_j f\end{align*}
while the solution of \eqref{CurveEq} is
\begin{align*}
0 & =  \ddot x^i - \dot z^i +  \sum_{j,k=1}^d \dot x^k \dot x_t^j \Gamma_{t,kj}^i  - \sum_{k=1}^d \dot x^k \left(\partial_k z_t^i + \sum_{j=1}^d \Gamma_{kj}^i z^j \right) + \sum_{j,k=1}^d \dot g_{jk} g^{ik} (\dot x^j- z^j) \\
& \qquad+ \sum_{j,k,l=1}^d g_{lj} g^{ik} \left(\partial_k z^l + \sum_{,=1}^d z^m \Gamma_{km}^i \right) (\dot x^j - z^j) - \sum_{k=1}^d g^{ij} \partial_j f .
\end{align*}
Figure~\ref{fig:AC} visualizes numerical realizations of most probable flows acting on landmarks with $u = u_t$ solving the Euler-Poincar\'e equations \eqref{eq:EP}.

\begin{example}[Brownian background noise] \label{ex:Background}
Assume that we have that $J=d$ and $\sigma_r = \partial_r$. Then $g$ is the Euclidean metric and $S=0$. Furthermore $\sum_{r=1}^d \sigma_r^2 = \Delta_g$, so $z_t = u_t$ and $f_t =\frac{1}{2} \sum_{r=1}^d \partial_r u_t^r$.
 We will let $\bar{\nabla}$ denote the flat connection on $\mathbb{R}^d$, that is, $\bar{\nabla}_u w = (Dw)u$ where $Dw$ is the Jacobi matrix of $w$. The equation for the most probable flow is then $\dot\varphi_t = (w_t+ u_t)(\varphi_t)$ with $w_t$ being a solution of
$$ \dot w_t  + \bar{\nabla}_{w_t+u_t} w_t  + (\bar{\nabla} u_t)^* w_t =\frac{1}{2} \bar{\nabla} \DIV u_t,$$
or
$$\dot w_t^i  + \sum_{j=1}^d \left((w_t^j + u_t^j) \partial_j w^i_t  + w_t^j \partial_i u_t^j -  \frac{1}{2} \partial_i \partial_j u_t^j \right) =0,$$
A most probable path is a solution
$$\ddot x_t^i  + \sum_{j=1}^d \dot x_t^j  \partial_i u_t^j(x_t)  = \frac{1}{2} \partial_i (\|u_t\|^2 + \DIV(u_t))(x_t)=0.$$
\end{example}

\section{Perturbed right-invariant geodesic flows}
\label{sec:min_energy_flows}
We now focus on specific examples of Kunita SDEs, mainly arising from geometric mechanics and shape analysis. Right-invariant metrics on the diffeomorphism group $\Diff(M)$ determine the drift field $u_t$ as derivative of geodesic equations in the group. These are in turn described by the EPDiff equations (Euler-Poincar\'e on diffeomorphisms) and used extensively for example in the Large Deformation Diffeomorphic Metric Mapping (LDDMM) shape matching framework, see \cite{younesShapesDiffeomorphisms2010,holmGeometricMechanicsPart2011a}. Stochastic extensions of the EPDiff equations include the stochastic EPDiff equations \cite{holmVariationalPrinciplesStochastic2015} and the stochastic EPDiff perturbations \cite{ACC14}. In the former scheme, the minimization of a stochastic variational principle implies that the drift becomes stochastic and the flow thus non-Brownian. In the later, taking expected energy results in deterministic drift and thus Brownian flows. Below, we will outline these schemes and relate to most probable paths both theoretically and numerically. 


\subsection{Reduction and EPDiff equations}
Let $M$ be a domain in $\mathbb{R}^d$ and let $\Diff(M)$ be its group of diffeomorphisms. We consider $\Diff(M)$ as a Lie group with Lie algebra given by vector fields $\calX(M)$. With this formalism, $\ad(u)v = -[u,v]$ is the negative of the usual Lie algebra for $u,v \in \calX(M)$, while for $\varphi \in \Diff(M)$, we have $\Ad(\varphi)u (x) = \varphi_{*} u(\varphi^{-1}(x))$. If $\varphi_t$ a curve in $\Diff(M)$, then $\varphi_t$ has right logarithmic derivative
\begin{equation} \label{eq:flow}
    \delta^r \varphi_t =: \dot \varphi_t \cdot \varphi_t^{-1} =u_t \in \calX(M) \text{ if } \dot \varphi(x) = u_t(\varphi_t(x)). 
\end{equation}
Let $A$ be a positive, self-adjoint operator on $\calX(M)$. Define
\begin{equation}
\langle u, u \rangle_{\calX(M)}= \| u \|_{\calX(M)}^2 = \int_M \langle  Au, u \rangle dx, \qquad u\in \calX(M).
\label{eq:metric}
\end{equation}
This can be used to measure the energy of a curve $\varphi_t$ by
\begin{equation} \label{Energy} E(\varphi_t) = \frac{1}{2} \int_0^T \| \delta^r \varphi_t \|^2_{\calX(M)} dt.\end{equation}
With $\varphi_t$ satisfying \eqref{eq:flow},
a first-order condition for $\varphi_t$ to minimize the energy between an initial state $\varphi_0$ and a final state $\varphi_T$ is that it satisfies the Euler-Poincar\'e equation
\begin{equation}
\dot u_t + \ad(u_t)^* u_t =0,
\label{eq:EP}
\end{equation}
where $\ad(u_t)^*$ here is the adjoint of $\ad(u_t)w_t = - [u_t, w_t]$ with respect to the inner product $\langle \cdot , \cdot \rangle_{\calX(M)}$. Explicitly, 
$$\dot m_t + (Dm_t) u_t + (\DIV u_t) m_t + (Du_t)^Tm_t =0 , \qquad m_t = Au_t.$$
See e.g. \cite{HTY09} for more details.

The inner product $\left<u,v\right>_{\mathcal X(M)}=\left<Au,v\right>_{L^2}$ used in \eqref{eq:metric} defines a right-invariant Riemannian metric on the subgroup $G=\{\phi_T|\phi_0=\id,\ \|u\|<\infty\}$ of end points $\phi_T$ of finite energy paths satisfying the flow equation \eqref{eq:flow} by setting
\begin{equation*}
    \left<v,u\right>_{T_\phi G}
    =
    \left<v\circ\phi^{-1},u\circ\phi^{-1}\right>_{\mathcal X(M)}
    .
\end{equation*}
The Euler-Poincar\'e equations are geodesic equations for this metric \cite{arnoldGeometrieDifferentielleGroupes1966,arnoldMathematicalMethodsClassical1978}. 

\subsection{Stochastic Euler-Poincar\'e flows}
A stochastic version of the EPDiff equations can be obtained by perturbing the reconstruction equation \eqref{eq:flow} to obtain
\begin{equation}
    d\varphi_t(x) = u_t(\varphi_t(x))dt
    + \sum_{r=1}^J \sigma_r(\varphi_t(x)) \circ dW^r
    \label{eq:perturbed_rec}
\end{equation}
for $J$ vector fields $\sigma_r\in\mathcal X(M)$ are deterministic, but we allow $u_t$ to be random. This results in the stochastic flows introduced in~\cite{holmVariationalPrinciplesStochastic2015} for fluids and subsequently in~\cite{arnaudonGeometricFrameworkStochastic2019} for shapes. Because the stochastics is added to the right logarithmic derivative, much of the right-invariant geometric structure is preserved, for example the momentum map. This implies that critical flows for the energy \eqref{Energy} with the relation between $u_t$ and $\phi_t$ given by \eqref{eq:perturbed_rec} can identified with a reduction principle analogous to the deterministic case. Specifically, one has the following stochastic extension of the Euler-Poincar\'e equations \eqref{eq:EP}.
\begin{theorem}[\cite{holmVariationalPrinciplesStochastic2015,arnaudonGeometricFrameworkStochastic2019}]
    With the stochastic perturbed \eqref{eq:perturbed_rec}, critical flows for \eqref{Energy} take the form
\begin{equation}
\dot m_t 
+ \ad(u_t)^* m_t 
+ \sum_{r=1}^J\ad(\sigma_r)^* m_t \circ dW_t^r
=0,
\label{eq:stochasticEP}
\end{equation}
with momentum $m_t=Au_t$ and with $W_t$ an $\mathbb R^J$-valued Wiener process. 
\end{theorem}
Note that $u_t$ satisfying the system \eqref{eq:stochasticEP} will be stochastic, and the stochastic EPDiff flows are therefore inherently different from the most probable flows, while both arise from a variational principle.

\subsection{Energy minimizing flows}
An alternative approach has been presented \cite{ACC14} for the setting of Riemannian manifolds, which we consider here in the flat case. Consider a semi-martingale $\varphi_t$ in our open subset $M$ of $\mathbb{R}^d$ with induced filtration $\scrF_t$, and let $w_t = \int_0^t d\varphi_t \circ \varphi_t$ be its anti-development with respect to the flat connection, which is also a semi-martingale. We introduce a stochastic analogue of the left logarithmic derivative in \eqref{eq:flow},
\begin{equation*}  \textstyle \delta^r \varphi_t = \lim_{\ve \downarrow 0} \mathbb{E}\left[\frac{w_{t+\ve} -w_t}{\ve} |\mathscr{F}_t \right].\end{equation*}
We use this definition to introduce the expected energy
\begin{equation} \label{ExpEnergy} \textstyle E[\varphi_{\cdot}] = \frac{1}{2} \mathbb{E} \left[ \int_0^T \| \delta^r \varphi_t \|_{\calX(M)} dt \right],\end{equation}
with the expectation taken with respect to the measure induced by $\varphi_{[0,T]}$ on continuous paths from $[0,T]$ into $M$.
Furthermore, for any non-random vector field $v_t$ with $v_0 = v_T =0$, we define $\psi_{t}^s$ as the solution of $\psi_0^s = \id$, $\delta^r \psi^s_t = s \dot v_t$. We say that $\varphi_t$ is a critical point of the expected energy $E$ if $\frac{d}{ds} \mathbb{E}[\psi^s_{\cdot}  \circ \varphi_{\cdot} ]|_{s=0}=0$
for any such vector field $v_t$.

Consider now the case when $\varphi_t$ is the solution of \eqref{ShapeSDE},
giving us that $w_t = \int_0^t (u_t dt + \sum_{r=1}^J \sigma_{r,t} \circ dW^r)$ and that 
\begin{equation} \label{StockDelr} \delta^r \varphi_t = \lim_{\ve \downarrow 0} \mathbb{E}\left[\frac{w_{t+\ve} -w_t}{\ve} |\mathscr{F}_t \right] = u_t + \frac{1}{2} \sum_{r=1}^J \bar{\nabla}_{\sigma_{r,t}}\sigma_{r,t}=: \hat u_t,\end{equation}
is deterministic. We then have the following result.

\begin{theorem} \label{th:OptimalDrift}
For the flat connection $\bar{\nabla}$ on $\calX(M)$, define its Hessian by $\bar{\nabla}^2_{u,v} w = \bar{\nabla}_u \bar{\nabla}_v w - \bar{\nabla}_{\bar{\nabla}_u v} w$. Introduce a second order operator on vector fields by
$$\Box^\sigma_t = \sum_{r=1}^J \bar{\nabla}^2_{\sigma_{r,t}\sigma_{r,t}}.$$
Then $\varphi_t$ is critical if and only if $\hat u_t = u_t + \frac{1}{2}\sum_{r=1}^J \bar{\nabla}_{\sigma_{r,t}}^2 \sigma_{r,t}$ satisfies
\begin{equation} \label{OptU}
\dot {\hat u}_t+ \frac{1}{2} \Box^\sigma_t \hat u_t +  \ad(\hat u_t)^* {\hat u}_t = 0
\end{equation}
where $\ad(\hat u_t)^*$ is the dual with respect to $\langle \cdot , \cdot \rangle_{\calX(M)}$.
\end{theorem}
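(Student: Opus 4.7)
The plan is to exploit the fact that the perturbation $\psi^s_t$ is deterministic to reduce the expected-energy variation to a classical calculus-of-variations problem. First I would apply the Stratonovich chain rule to the composition $\Phi^s_t = \psi^s_t \circ \varphi_t$; since Stratonovich calculus obeys the ordinary Leibniz rule, this yields
\begin{equation*}
d\Phi^s_t = s\dot v_t|_{\Phi^s_t}\,dt + \Ad(\psi^s_t)u_t|_{\Phi^s_t}\,dt + \sum_{j=1}^J \Ad(\psi^s_t)\sigma_{j,t}|_{\Phi^s_t}\circ dW^j,
\end{equation*}
using the identity $D\psi|_x X|_x = \Ad(\psi)X|_{\psi(x)}$. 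Converting to It\^o form then gives
\begin{equation*}
\delta^r\Phi^s_t = s\dot v_t + \Ad(\psi^s_t)u_t + \tfrac{1}{2}\sum_{j=1}^J D\bigl[\Ad(\psi^s_t)\sigma_{j,t}\bigr]\Ad(\psi^s_t)\sigma_{j,t},
\end{equation*}
which is deterministic, so $E[\Phi^s_\cdot] = \tfrac{1}{2}\int_0^T \|\delta^r\Phi^s_t\|^2_{\calX(M)}\,dt$ becomes an ordinary integral.

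Next I would differentiate at $s=0$ using $\partial_s\psi^s_t|_{s=0} = v_t$ and the standard identity $\tfrac{d}{ds}|_{s=0}\Ad(\psi^s_t)X = \ad(v_t)X$ (in the paper's sign convention). The nontrivial part is the variation of the It\^o correction $\bar\nabla_{\sigma^s_j}\sigma^s_j$ where $\sigma^s_j := \Ad(\psi^s_t)\sigma_{j,t}$; writing $\sigma^s_j = \sigma_{j,t} + s\,\ad(v_t)\sigma_{j,t} + O(s^2)$, the derivative produces $\bar\nabla_{\ad(v_t)\sigma_{j,t}}\sigma_{j,t} + \bar\nabla_{\sigma_{j,t}}\ad(v_t)\sigma_{j,t}$. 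Using that $\bar\nabla$ is torsion-free and flat, together with $\ad(v)\sigma = -[v,\sigma]$, a short computation collapses this to $\bar\nabla_{\sigma_{j,t}}\bar\nabla_{\sigma_{j,t}}v_t - \bar\nabla_{v_t}\bar\nabla_{\sigma_{j,t}}\sigma_{j,t}$. Setting $\eta_t := \tfrac{1}{2}\sum_j \bar\nabla_{\sigma_{j,t}}\sigma_{j,t}$ so that $\hat u_t = u_t + \eta_t$, combining $\ad(v_t)u_t$ with $-\bar\nabla_{v_t}\eta_t$ (using the identity $[v_t,\eta_t] = \bar\nabla_{v_t}\eta_t - \bar\nabla_{\eta_t}v_t$) consolidates the linear parts into $\ad(v_t)\hat u_t$, while the remaining pieces assemble exactly into the Hessian form $\tfrac{1}{2}\Box^\sigma_t v_t$. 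The outcome is
\begin{equation*}
\tfrac{d}{ds}\bigl|_{s=0}\delta^r\Phi^s_t = \dot v_t + \ad(v_t)\hat u_t + \tfrac{1}{2}\Box^\sigma_t v_t.
\end{equation*}

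Finally, the criticality condition $\tfrac{d}{ds}|_{s=0}E[\Phi^s_\cdot] = 0$ becomes $\int_0^T \langle \hat u_t,\, \dot v_t + \ad(v_t)\hat u_t + \tfrac{1}{2}\Box^\sigma_t v_t\rangle_{\calX(M)}\,dt = 0$. Integrating by parts in $t$ (with $v_0 = v_T = 0$) moves $\dot v_t$ onto $\hat u_t$, and applying the antisymmetry $\ad(v_t)\hat u_t = -\ad(\hat u_t) v_t$ together with the definition of $\ad(\hat u_t)^*$ converts the $\ad$ term to $\langle \ad(\hat u_t)^*\hat u_t, v_t\rangle$. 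The $\Box^\sigma_t$ contribution is handled by dualizing under $\langle\cdot,\cdot\rangle_{\calX(M)}$ as well. Since $v_t$ is an arbitrary smooth vector field vanishing at the endpoints, this yields \eqref{OptU}.

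The principal obstacle is the algebraic reduction of the variation of the It\^o correction $\bar\nabla_{\sigma^s_j}\sigma^s_j$: combining two first-order terms into the single second-order operator $\Box^\sigma_t$ requires careful use of flatness and the torsion-free property, and the cancellation with $-\bar\nabla_{v_t}\eta_t$ is precisely what produces the Hessian $\bar\nabla^2_{\sigma_j,\sigma_j}$ rather than the plain iterated derivative. A secondary but nontrivial bookkeeping issue is the dualization of $\Box^\sigma_t$ against the $L$-weighted inner product $\langle\cdot,\cdot\rangle_{\calX(M)}$ when passing from $\langle \hat u_t, \Box^\sigma_t v_t\rangle$ to a term acting on $\hat u_t$, so sign conventions for $\ad$ and the adjoints must be tracked carefully throughout.
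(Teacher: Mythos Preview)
Your approach is essentially identical to the paper's: both compute the Stratonovich SDE for $\psi^s_t\circ\varphi_t$, extract $\delta^r$, differentiate at $s=0$, use flatness and torsion-freeness of $\bar{\nabla}$ to collapse the variation of the It\^o correction into $\ad(v_t)\hat u_t$ plus the Hessian term, and then integrate by parts invoking the symmetry of $\Box^\sigma_t$ with respect to $\langle\cdot,\cdot\rangle_{\calX(M)}$. The only discrepancy is a sign: your computation yields $+\tfrac{1}{2}\Box^\sigma_t v_t$ in the variation of $\delta^r$, whereas the paper writes $-\tfrac{1}{2}\Box^\sigma_t v_t$; your sign is consistent with the identity $[u,\bar{\nabla}_v w]-\bar{\nabla}_{[u,v]}w-\bar{\nabla}_v[u,w]=\bar{\nabla}^2_{v,w}u$ that the paper itself records, so this appears to be a typo in the paper's proof rather than an error on your part.
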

We remark that if we reformulate the Stratonovich SDE to the It\^o SDE \eqref{Ito}, then $\hat u_t$ in Theorem~\ref{th:OptimalDrift} is exactly the drift term in the It\^o SDE. Furthermore, if we consider $\Box^\sigma_t$ acting on each component, then $\Box^\sigma_t v = \sum_{i=1}^d \sum_{j=1}^J (\sigma_{j,t}^2 - \bar{\nabla}_{\sigma_{j,t}} \sigma_{j,t}) v_j \partial_{j}$ and $\sum_{j=1}^J (\sigma_{j,t}^2 - \bar{\nabla}_{\sigma_{j,t}} \sigma_{j,t})$ is the generator of the drift term in~\eqref{Ito}.

The proof of Theorem~\ref{th:OptimalDrift} can be deduced from that of \cite[Theorem~3.2 and Theorem~3.4]{ACC14}, but we include it here for the sake of completeness.
\begin{proof}
Before we begin the proof, let us consider the following property of the flat connection $\bar{\nabla}$. Recall that the Hessian is the operator $\bar{\nabla}^2_{u,v} = \bar{\nabla}_{u} \bar{\nabla}_v - \bar{\nabla}_{\bar{\nabla}_u v}$. Since $\bar{\nabla}$ is flat and torsion free, its Hessian $\bar{\nabla}^2$ is symmetric. We then see that
\begin{align*}
& [u, \bar{\nabla}_v w] - \bar{\nabla}_{[u,v]} w - \bar{\nabla}_{v} [u,w] \\
& = \bar{\nabla}_u \bar{\nabla}_v w - \bar{\nabla}_{\bar{\nabla}_v w} u - \bar{\nabla}_{\bar{\nabla}_u v} w + \bar{\nabla}_{\bar{\nabla}_v u} w - \bar{\nabla}_v \bar{\nabla}_u w + \bar{\nabla}_v \bar{\nabla}_w u \\
& = \bar{\nabla}^2_{u,v} w - \bar{\nabla}_{v,u}^2 w + \bar{\nabla}_{v,w}^2 u = \bar{\nabla}_{v,w}^2 u.
\end{align*}

Write $\varphi^s_t = \psi_t^s \circ \varphi_t$. By the chain rule for Stratonovich differentials,
\begin{align*} d \varphi_t^s & = \dot \psi_t^s(\varphi_t) + \psi_{t,*}^s \circ dw_t(\varphi_t) = \dot \psi_t^s((\psi_t^s)^{-1} \circ \varphi_t^s) + \psi_{t,*}^s \circ dw_t(( \psi_t^s)^{-1} \circ \varphi_t)  \\
& =  s \dot v_t (\varphi_t^s) + \Ad(\psi_t^s)u_t (\varphi_t^s) dt + \sum_{r=1}^J (\Ad(\psi_t^s)\sigma_{r,t})(\varphi_{t}^s) \circ dW^r,\end{align*}
and so by \eqref{StockDelr},
$$\delta^r \varphi_t^s = s\dot v_t + \Ad(\varphi_t^s) u_t + \frac{1}{2} \sum_{r=1}^J \bar{\nabla}_{\Ad(\psi_t^s)\sigma_{r,t}}\Ad(\psi_t^s) \sigma_{r,t},$$
Furthermore,
$$\tfrac{d}{ds}\delta^r \varphi_t^s |_{s=0} = \dot v_t + \ad(v_t) u_t + \frac{1}{2} \sum_{r=1}^J \bar{\nabla}_{\ad(v_t) \sigma_{r,t}}\sigma_{r,t}  + \frac{1}{2} \sum_{r=1}^J \bar{\nabla}_{ \sigma_{r,t}} \ad(v_t)\sigma_{r,t}.$$
Using our observation for $\bar{\nabla}$, we have
\begin{align*}
\tfrac{d}{ds}\delta^r \varphi_t^s |_{s=0} & = \dot v_t + \ad(v_t) \hat u_t - \frac{1}{2} \sum_{r=1}^J \bar{\nabla}_{\sigma_{r,t}, \sigma_{r,t}}^2 v_t \\
& = \dot v_t - (\ad(\hat u_t) + \frac{1}{2} \Box^\sigma_t) v_t .
\end{align*}
We then see that
\begin{align*}
\frac{d}{ds}E[\varphi_t^s]|_{s=0} &= 
 \int_0^T \left\langle \hat u_t , \dot v_t - (\ad(\hat u_t) + \frac{1}{2} \Box^\sigma_t) v_t \right\rangle_{\calX(M)} dt  
\end{align*}
and using that $\Box^\sigma_t$ is a symmetric operator with respect to $\langle \cdot , \cdot \rangle_{\calX(M)}$, the result follows.
\end{proof}

\begin{remark}
For context of how we used the properties of the flat connection in the proof of Theorem~\ref{th:OptimalDrift}, observe that for a general connection $\nabla$ with curvature $R$ and torsion $T$, 
\begin{align*}
& [u, \nabla_v w] - \nabla_{[u,v]} w - \nabla_{v} [u,w] \\
& = \nabla_{v,w}^2 + R(u,v)w + u+ (\nabla_v T)(u, w) + T(\nabla_v u, w).
\end{align*}
\end{remark}

\subsection{Most probable flows}
Let $\phi_t$ and $u_t=\delta^r\phi_t$ satisfy the Euler-Poincar\'e equations \eqref{eq:EP}. We can add noise to the system with the stochastic reconstruction~\eqref{eq:perturbed_rec} and obtain the stochastic system~\eqref{eq:stochasticEP}. Alternatively, we can consider the energy minimizing flows \eqref{OptU} to recover a deterministic flow. The most probable flow equations of section~\ref{sec:most_probable_flows} provide a second way to recover a deterministic flow as being most probable instead of energy minimizing, by solving \eqref{GlobalEq}. 
\begin{figure}[!ht]
    \centering
    \begin{subfigure}
        \centering
        \includegraphics[width=.23\linewidth,clip=true,trim=100 50 100 50]{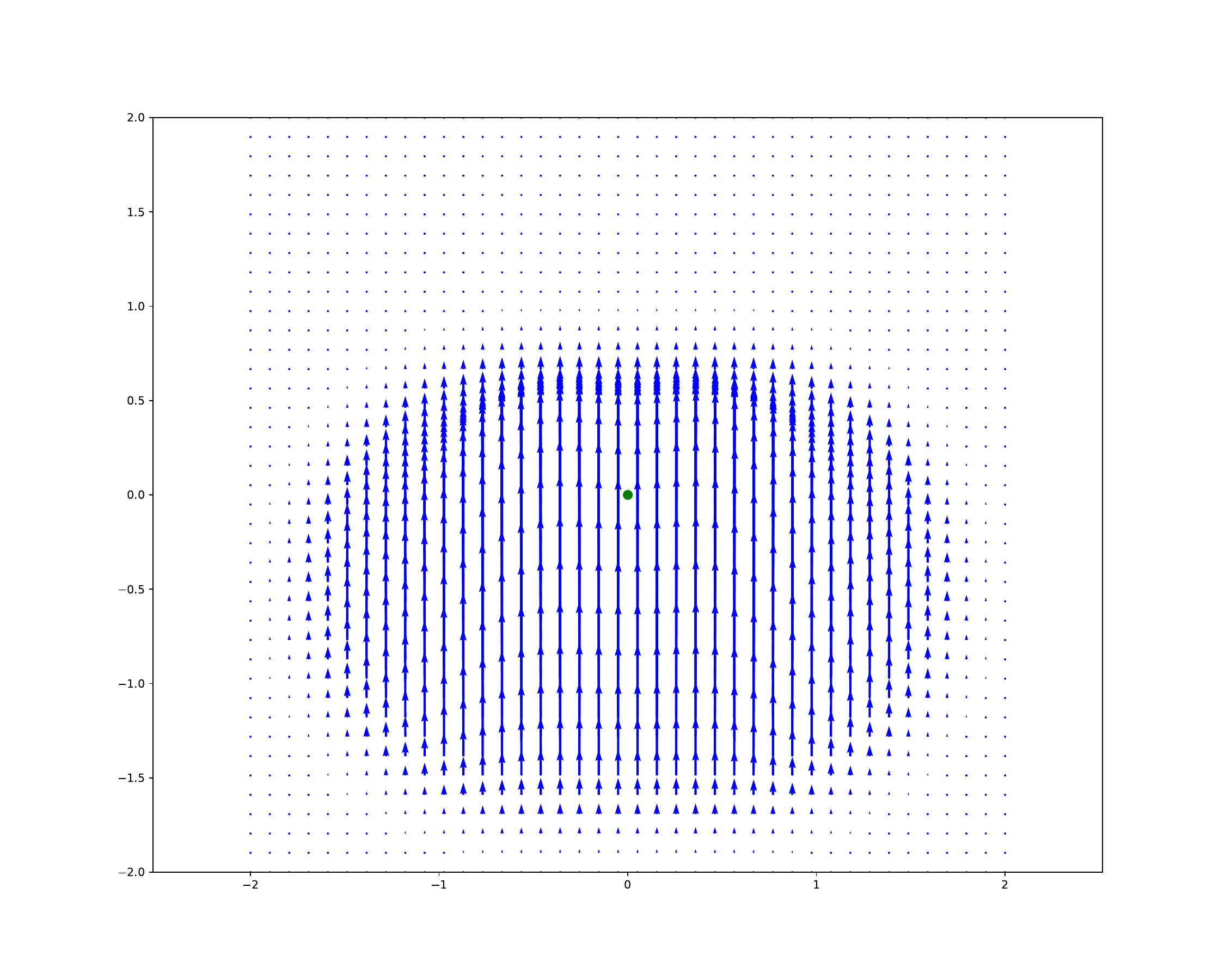}
    \end{subfigure}
    \hfill
    \begin{subfigure}
        \centering
        \includegraphics[width=.23\linewidth,clip=true,trim=100 50 100 50]{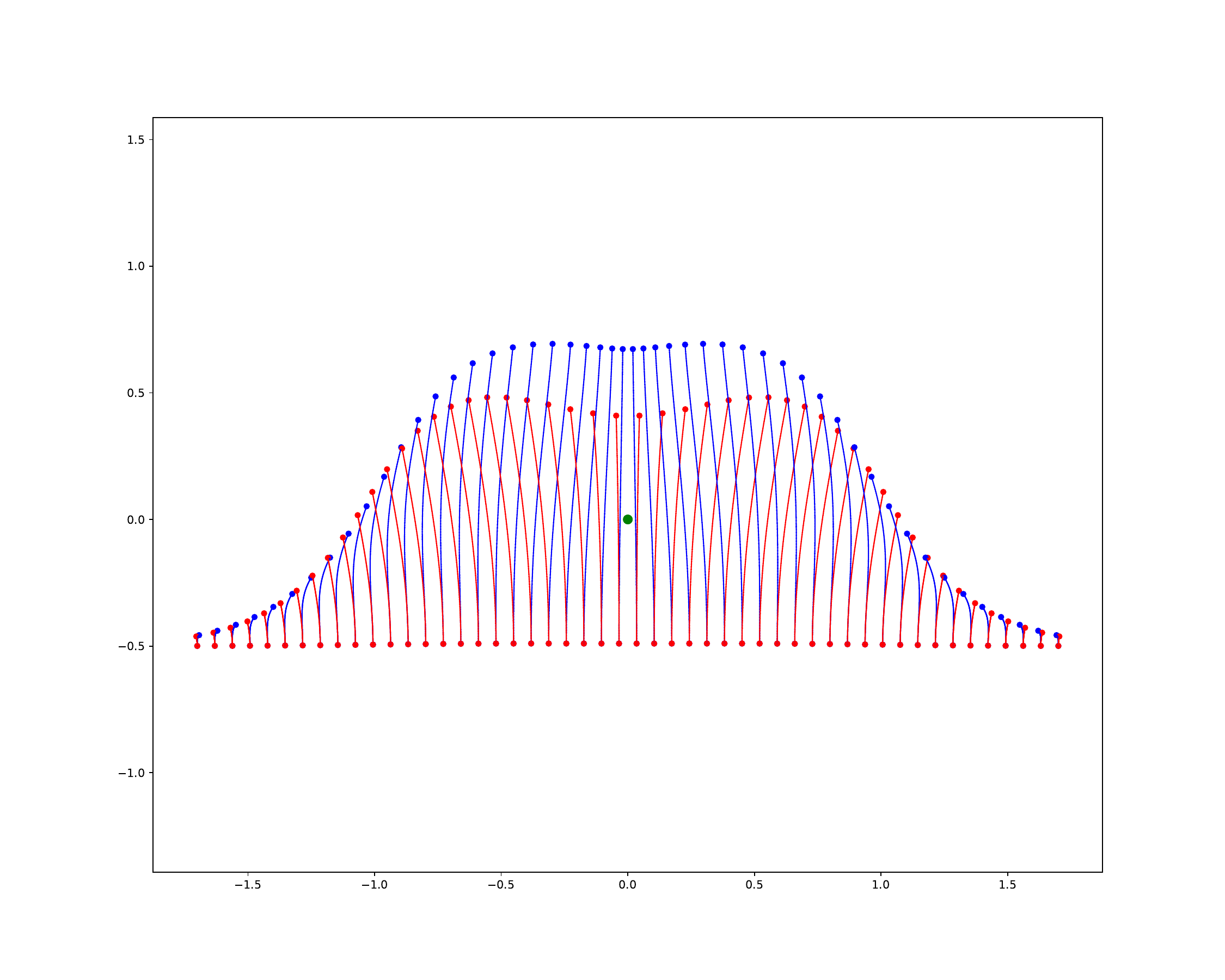}
    \end{subfigure}
    \hfill
    \begin{subfigure}
        \centering
        \includegraphics[width=.23\linewidth,clip=true,trim=100 50 100 50]{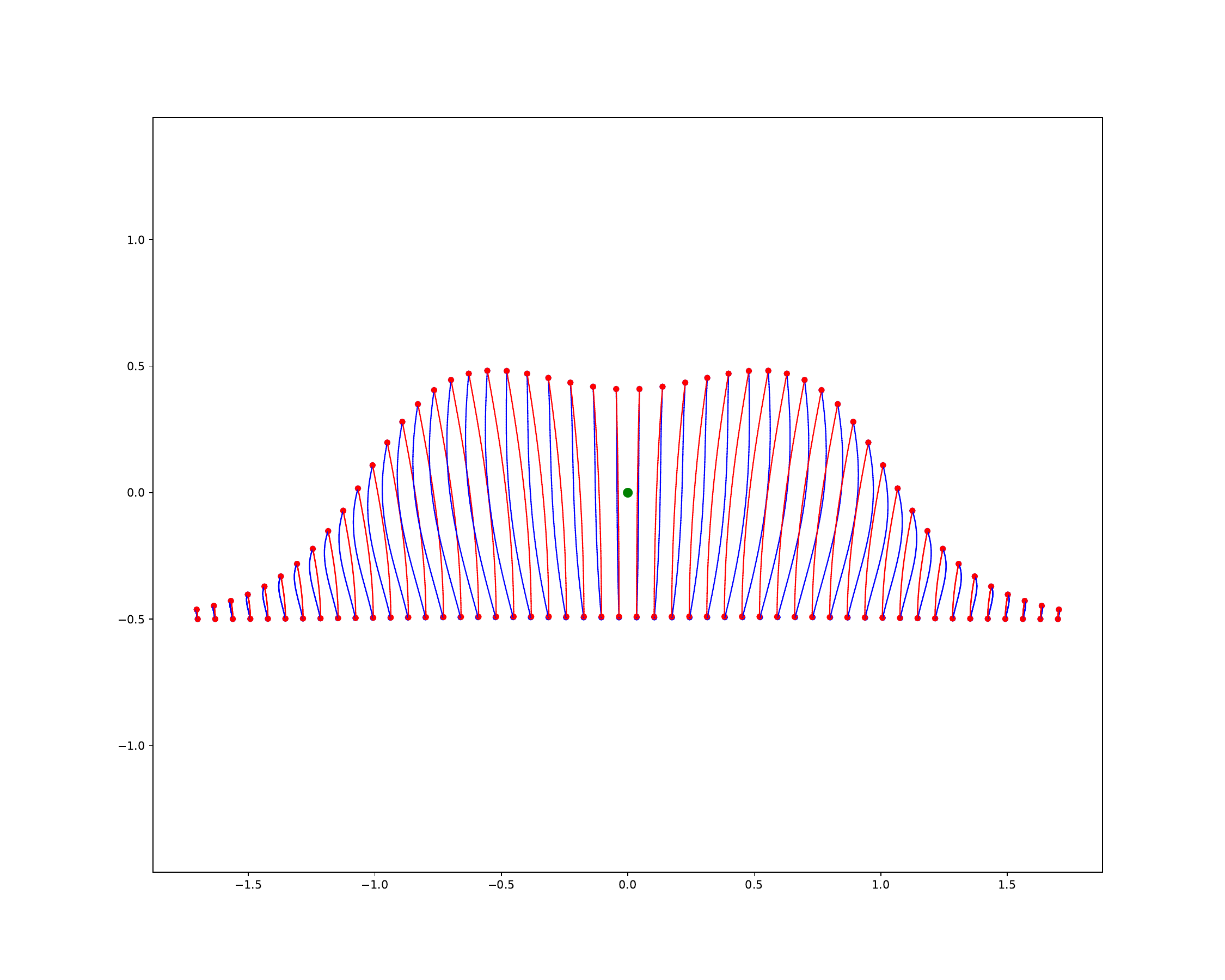}
    \end{subfigure}
    \begin{subfigure}
        \centering
        \includegraphics[width=.23\linewidth,clip=true,trim=100 50 100 50]{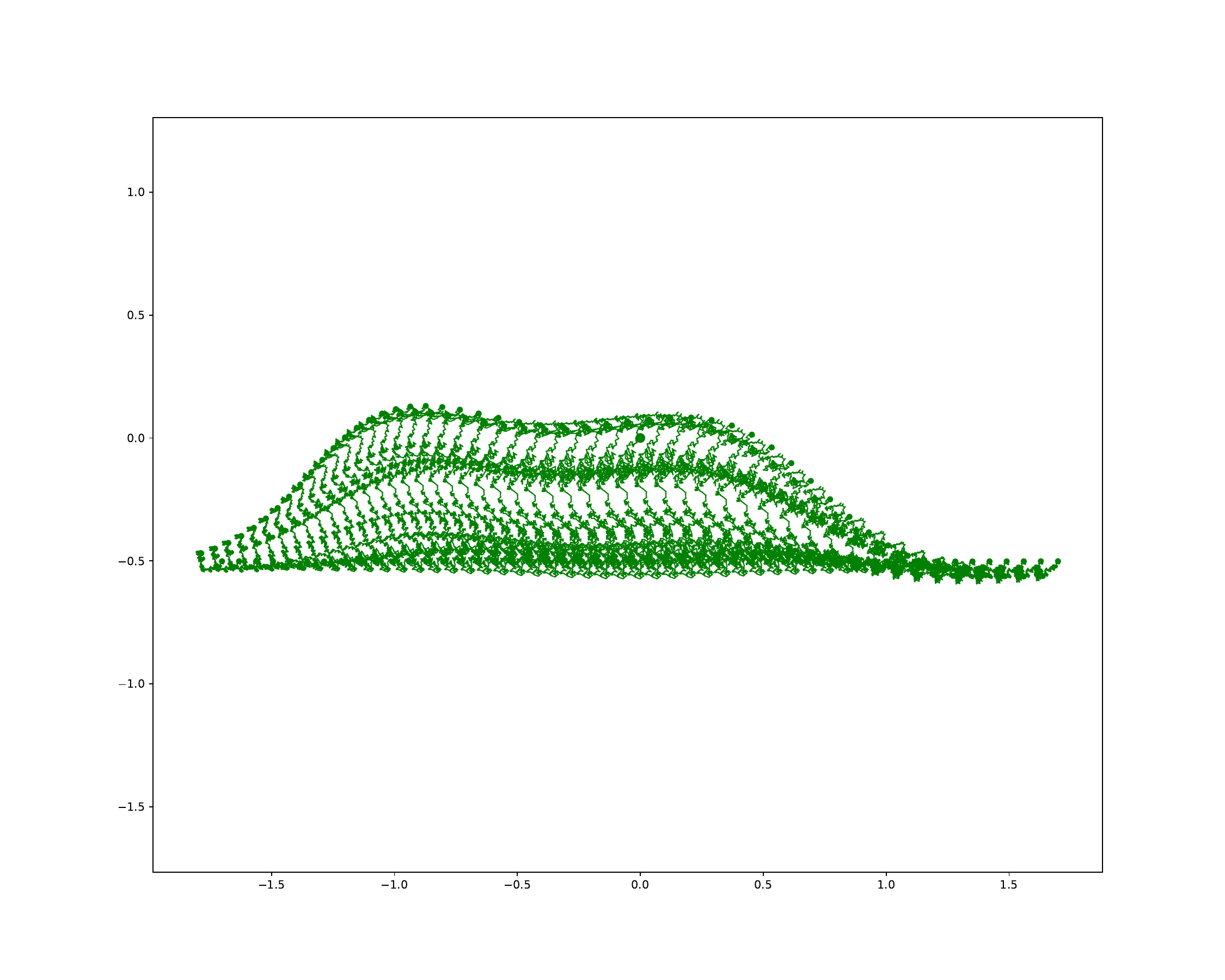}
    \end{subfigure}
    \begin{subfigure}
        \centering
        \includegraphics[width=.23\linewidth,clip=true,trim=100 50 100 50]{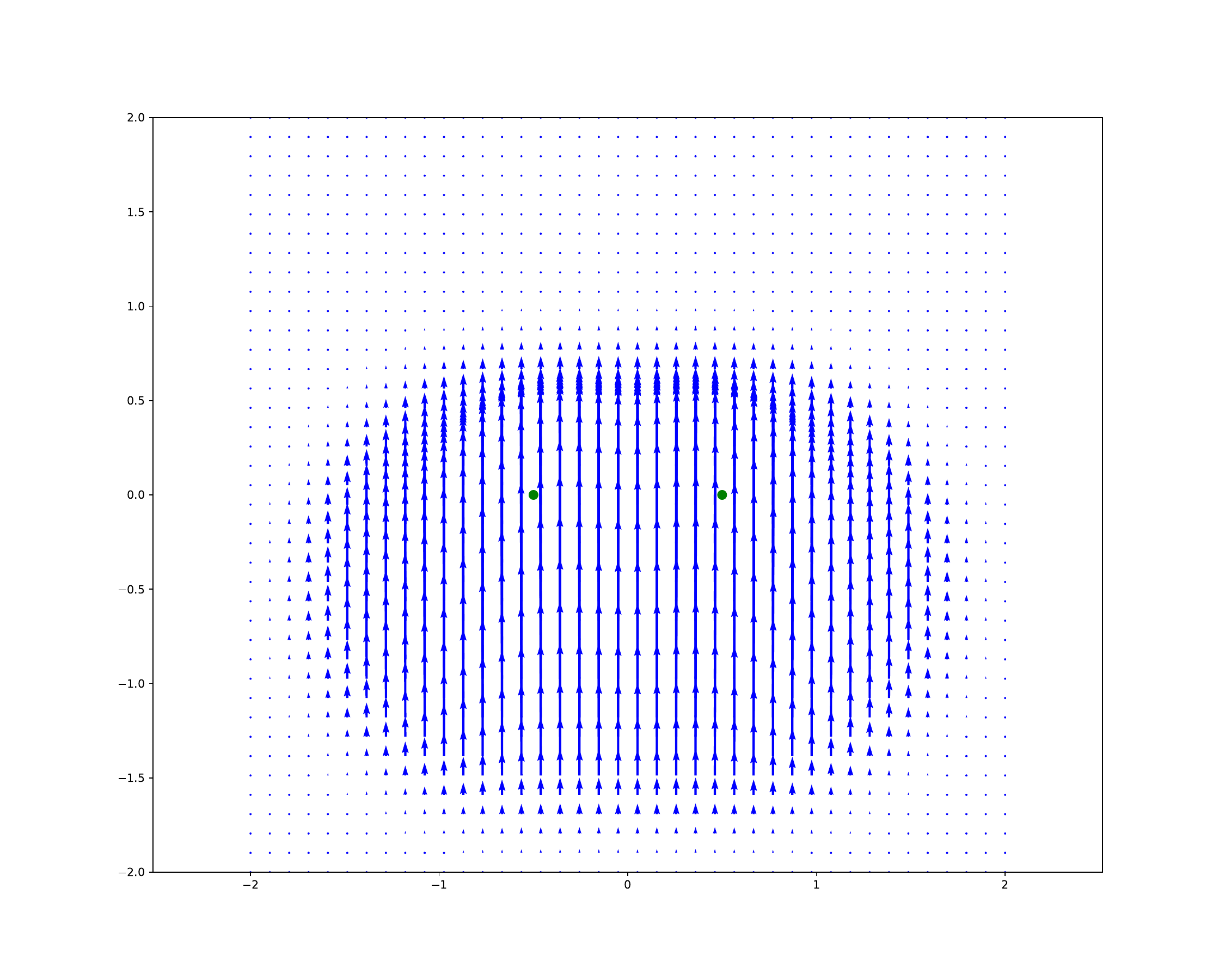}
    \end{subfigure}
    \hfill
    \begin{subfigure}
        \centering
        \includegraphics[width=.23\linewidth,clip=true,trim=100 50 100 50]{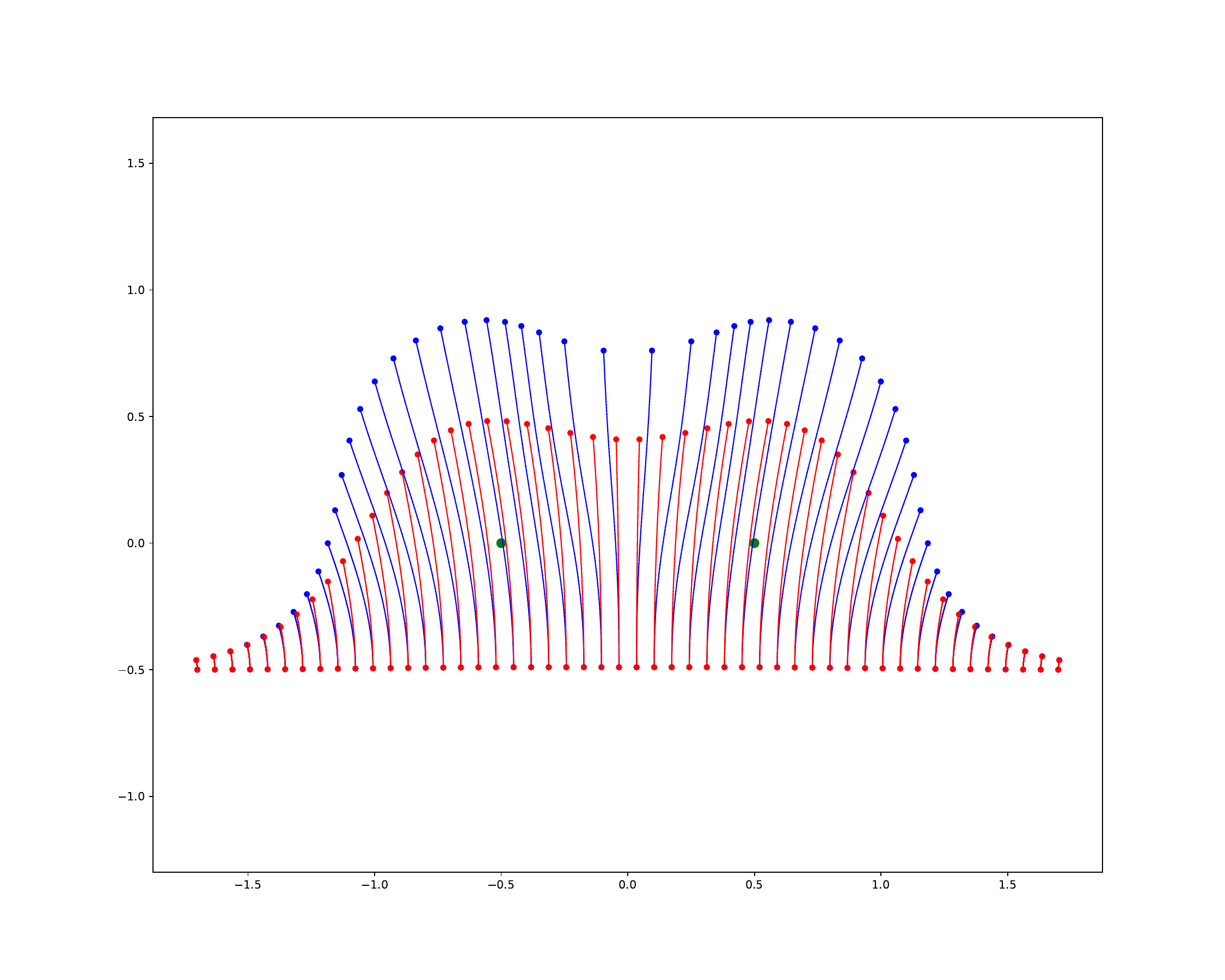}
    \end{subfigure}
    \hfil
    \begin{subfigure}
        \centering
        \includegraphics[width=.23\linewidth,clip=true,trim=100 50 100 50]{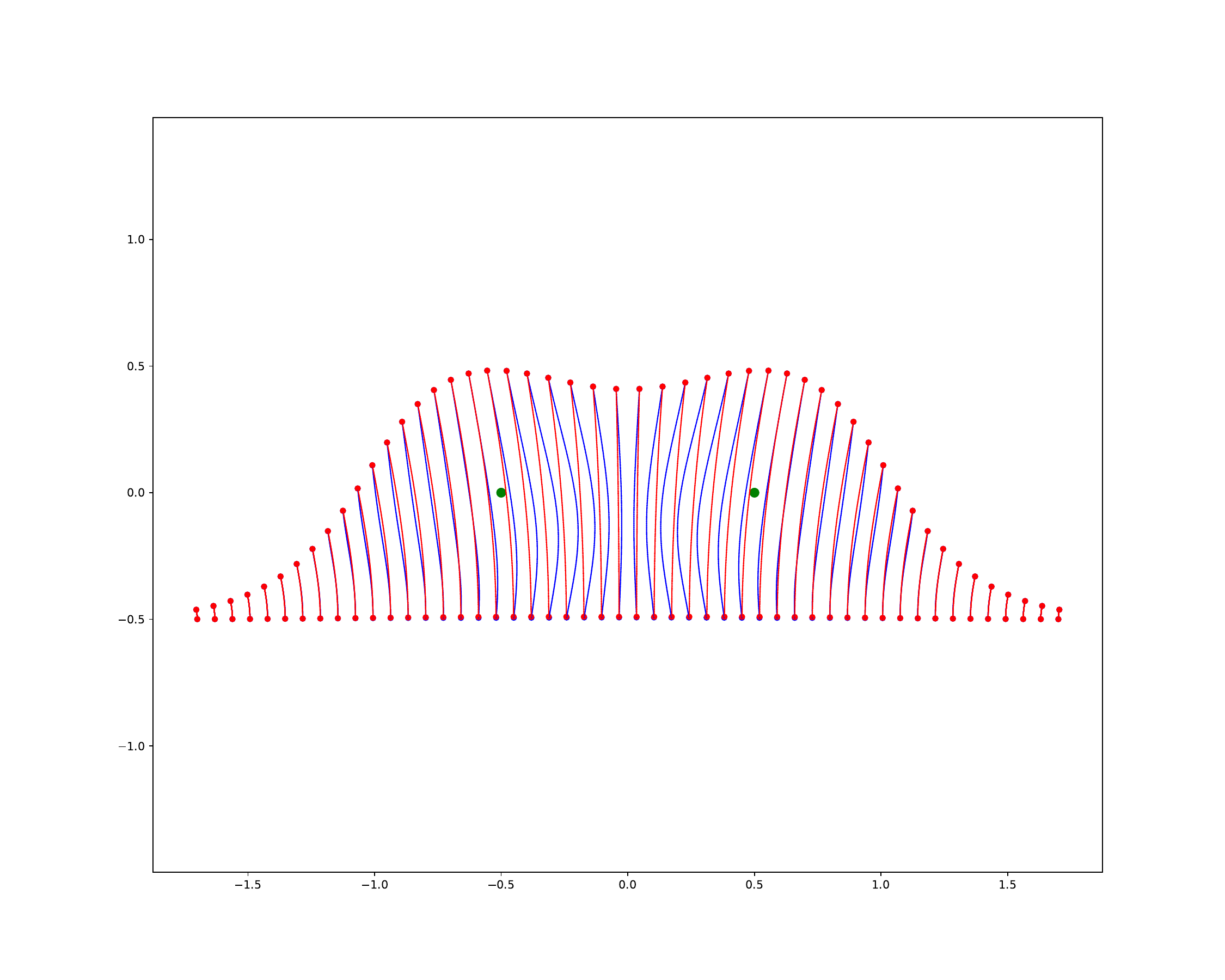}
    \end{subfigure}
    \begin{subfigure}
        \centering
        \includegraphics[width=.23\linewidth,clip=true,trim=100 50 100 50]{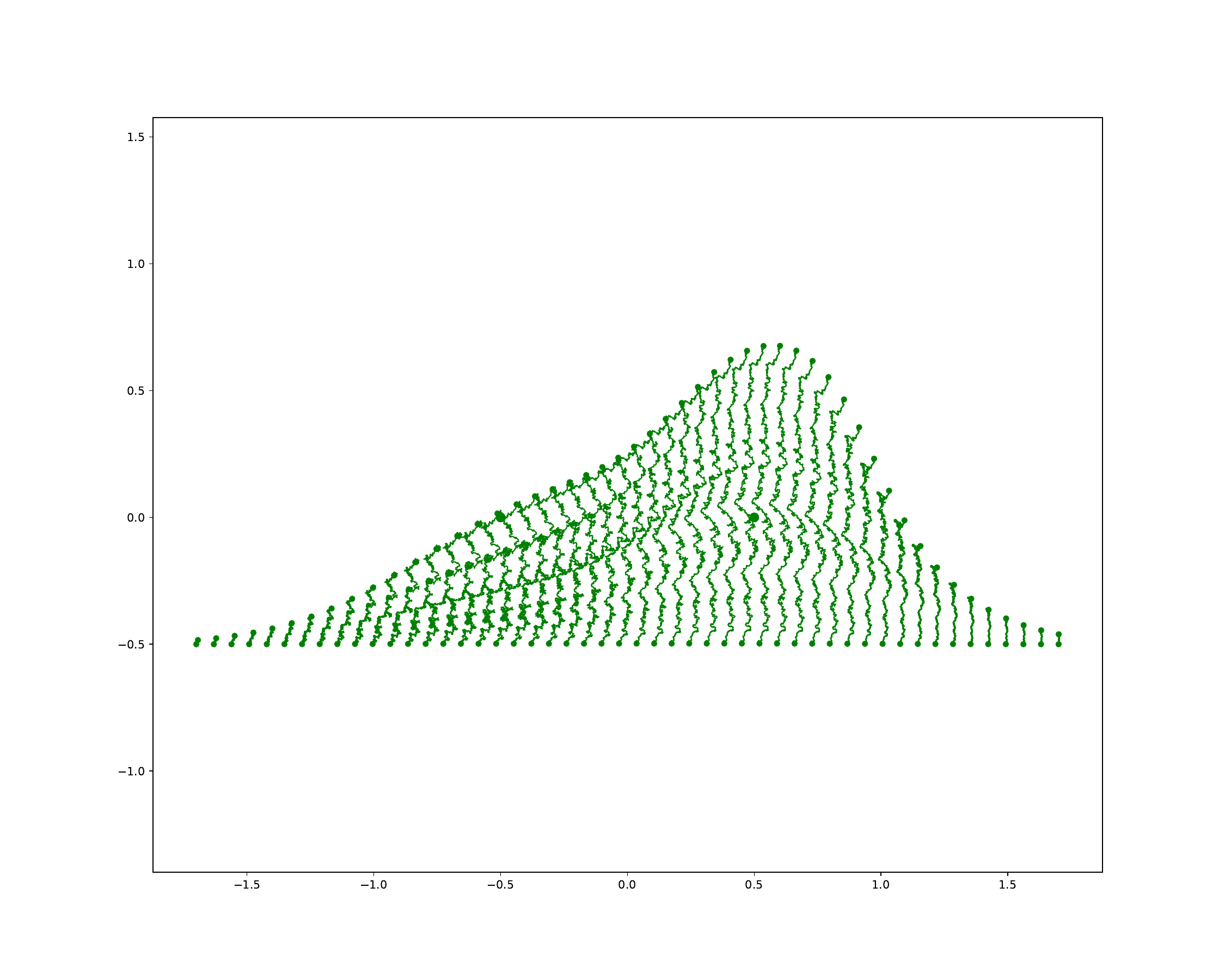}
    \end{subfigure}
    \begin{subfigure}
        \centering
        \includegraphics[width=.23\linewidth,clip=true,trim=100 50 100 50]{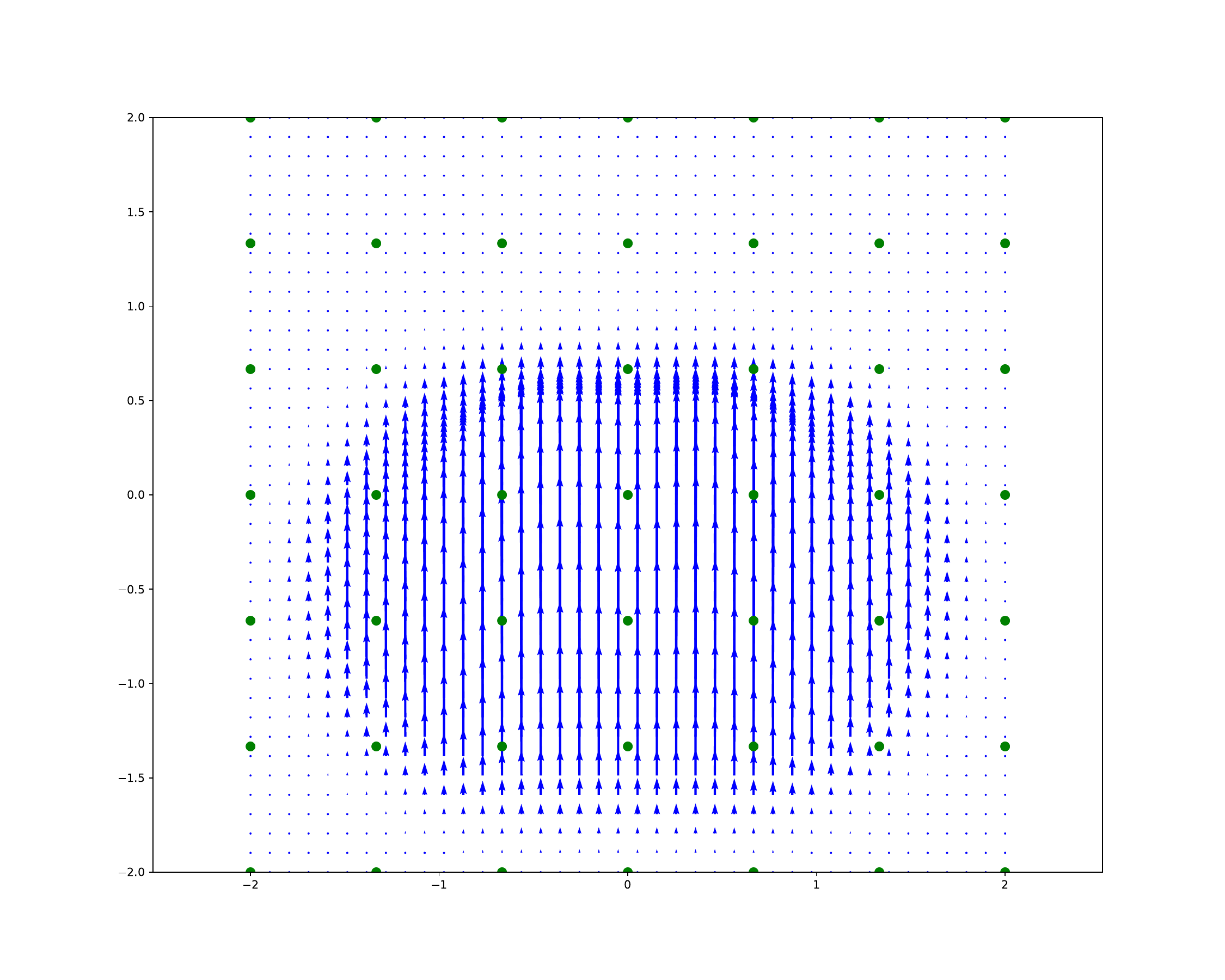}
    \end{subfigure}
    \hfill
    \begin{subfigure}
        \centering
        \includegraphics[width=.23\linewidth,clip=true,trim=100 50 100 50]{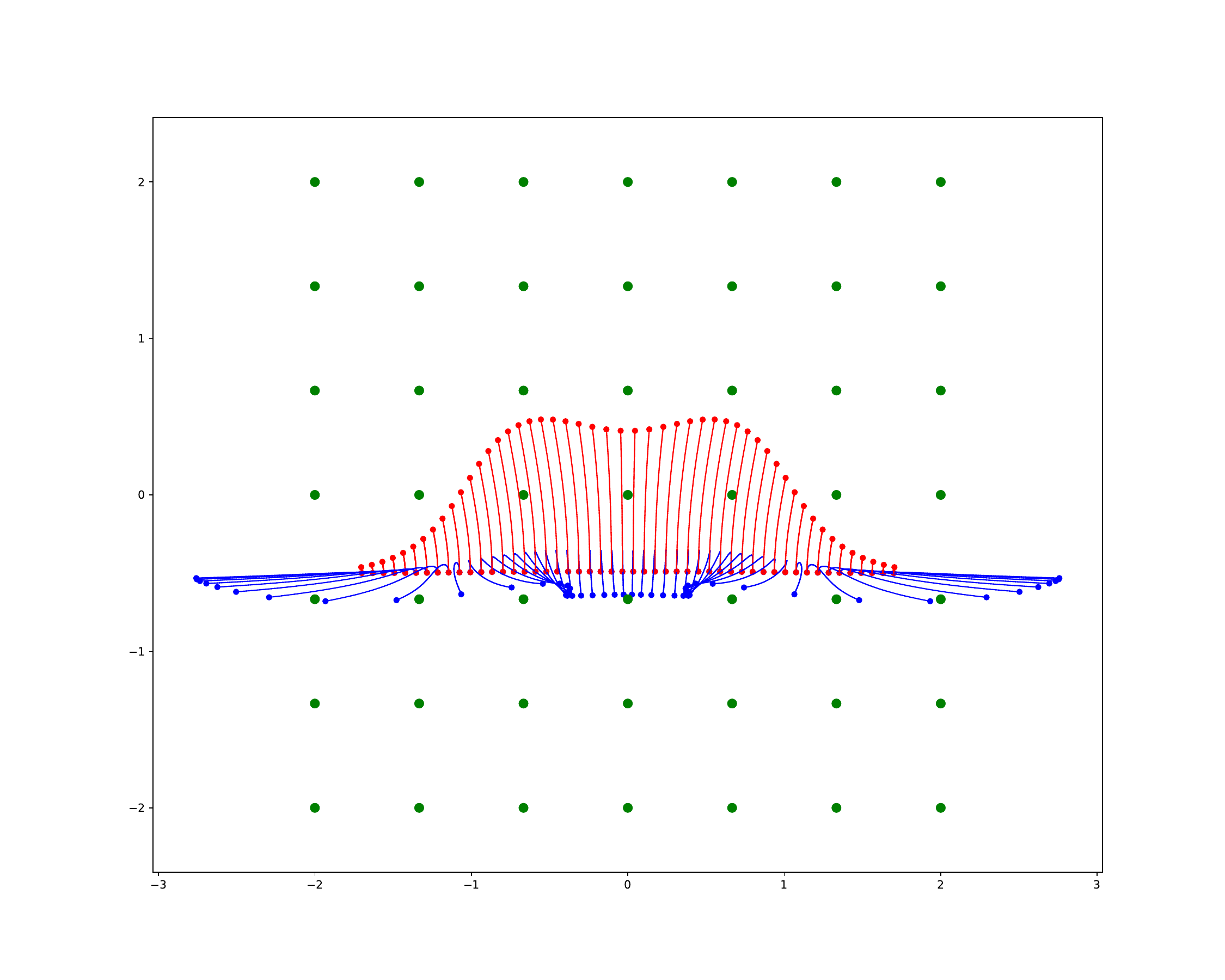}
    \end{subfigure}
    \hfill
    \begin{subfigure}
        \centering
        \includegraphics[width=.23\linewidth,clip=true,trim=100 50 100 50]{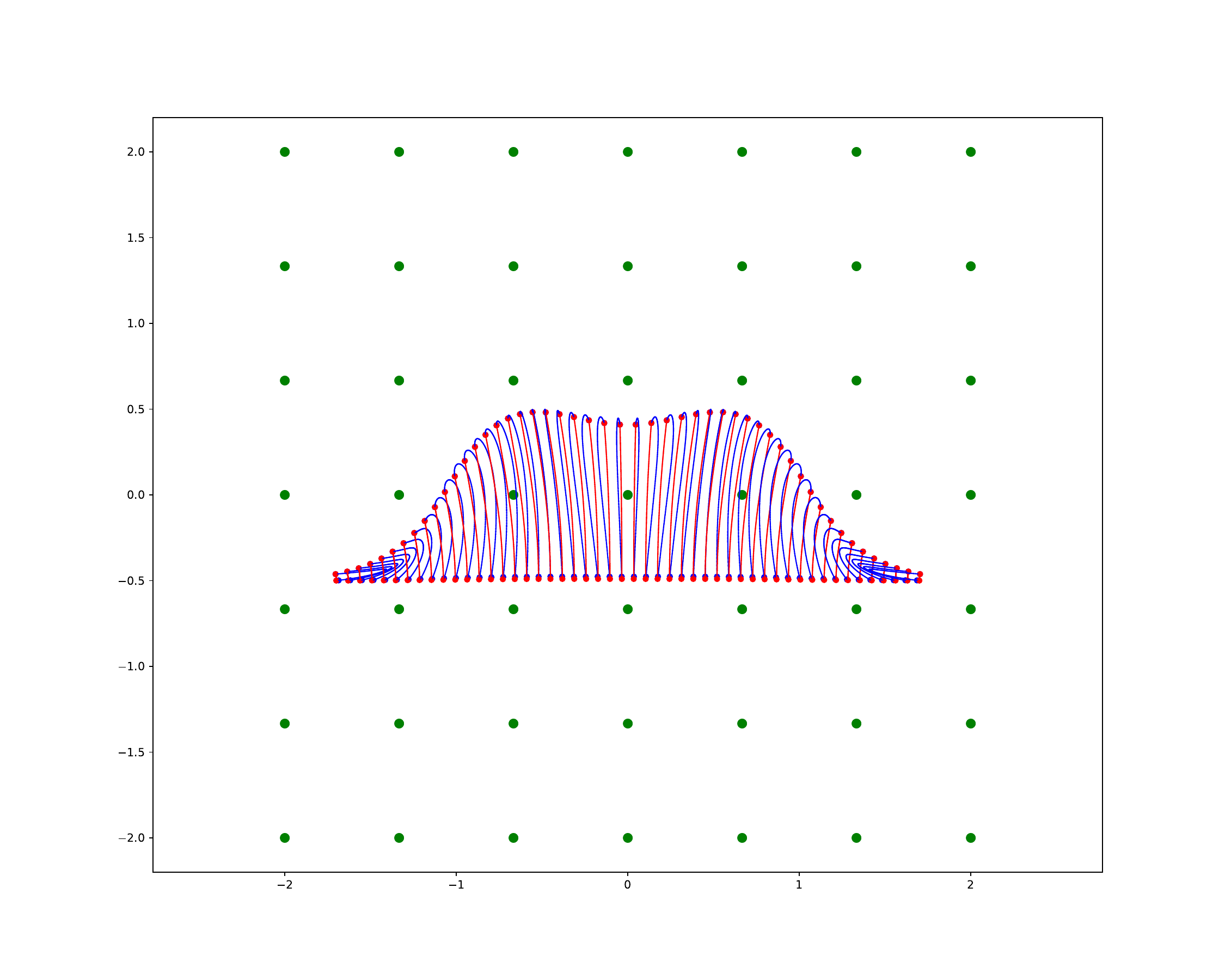}
    \end{subfigure}
    \begin{subfigure}
        \centering
        \includegraphics[width=.23\linewidth,clip=true,trim=100 50 100 50]{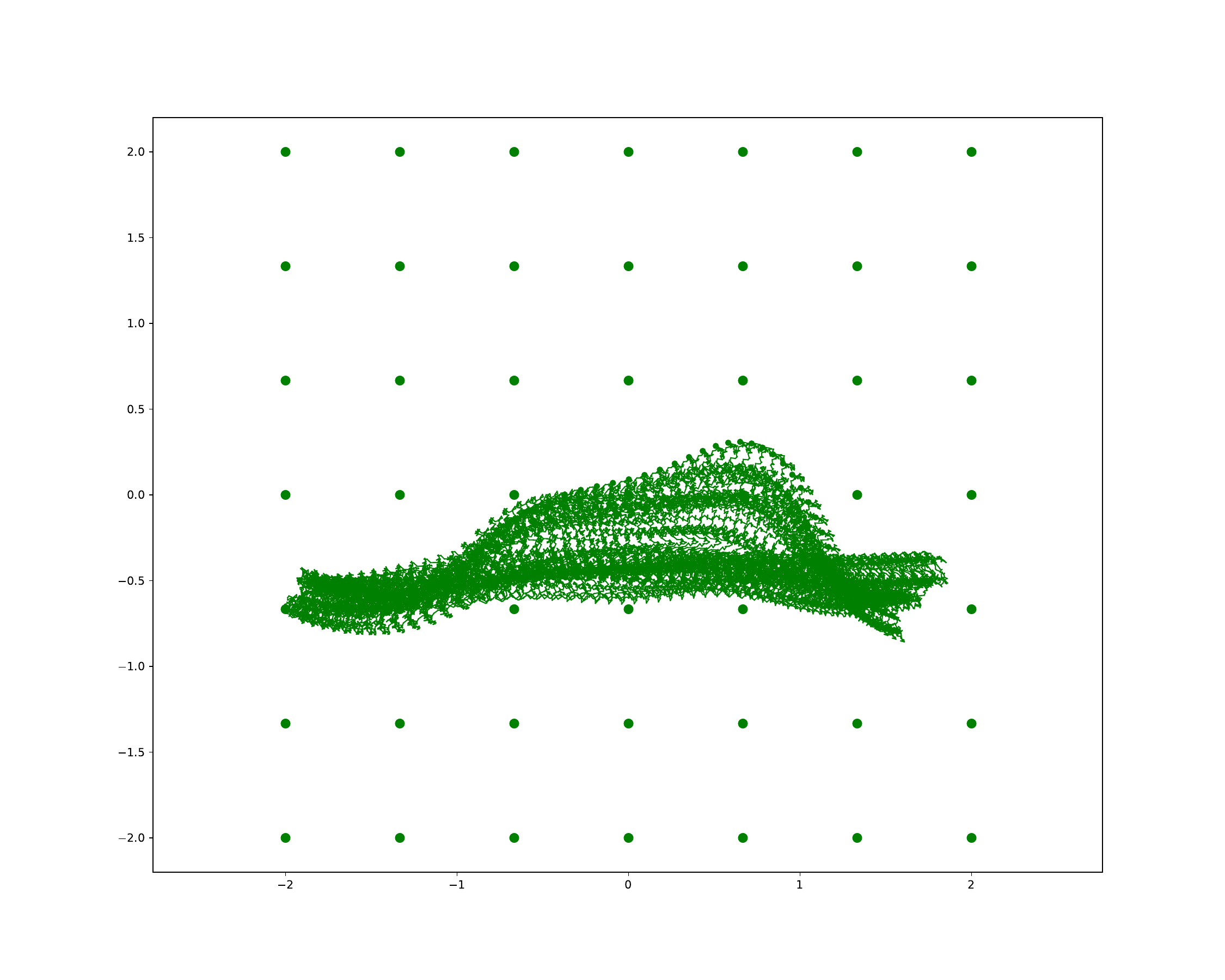}
    \end{subfigure}
    \caption{Red curves: deterministic flow (zero noise) with $u_t$ solving \eqref{OptU}; blue curves: MPP trajectories; green curves: stochastic trajectories. Left column: flow field $u_t$ at $t=0$ with noise centers (green points); center left column: forward integration of MPP equations for 40 landmarks evenly distributed at horizontal $y=-0.5$ line; center right column: initial value problem solved for each landmark between the end points of the deterministic landmark trajectories; right column: stochastic EPDiff sample path (noise amplitude downscaled for visualization). Top row: a single noise field at $(0,0)$; middle row: two noise fields at $(-0.5,0)$ and $(0.5,0)$; bottom row: grid of $7^2$ noise fields.}
    \label{fig:AC}
\end{figure}

\subsection{Visualization of perturbed flows}
We now numerically compare examples of solutions to the Euler-Poincar\'e equations, to the stochastic Euler-Poincar\'e equations, and most probable flows\footnote{The systems are implemented in Jax Geometry \url{https://bitbucket.org/stefansommer/jaxgeometry/}.}. Figure~\ref{fig:AC} shows forward flows of the unperturbed system satisfying the Euler-Poincar\'e equations. We then add noise in three different configurations ($1$, $2$, and $7^2$ Gaussian noise fields) and compute forward MPP equations, and solve the boundary value problem of most probable paths between the start and end points of the deterministic flow. The effect of the noise and resulting deviation of the most probable paths from the unperturbed trajectories is clearly visible in both cases. Lastly, we plot sample paths from the stochastic Euler-Poincar\'e equations. The landmark trajectories are highly correlated because of the spatial regularity of the noise fields, and the noise clearly perturbs the large scale dynamics of the system.

\bibliographystyle{abbrv}
\bibliography{Bibliography}

\end{document}